\documentclass[11pt]{amsart}
\usepackage[margin=1in]{geometry}

\usepackage{amsthm,amsmath,amssymb,color}

\usepackage[hidelinks]{hyperref}
\usepackage{graphicx}

\usepackage{pdflscape}
\usepackage{afterpage}
\usepackage{comment}
\usepackage[foot]{amsaddr}
\usepackage{threeparttable}

\theoremstyle{definition}
\newtheorem{defi}{Definition}[section]

\theoremstyle{plain}
\newtheorem{thm}[defi]{Theorem}
\newtheorem{lemma}[defi]{Lemma}

\newtheorem{rem}{Remark}
\newcommand{\lr}[1]{\lbrace #1 \rbrace}
\newcommand{\lrr}[1]{\langle #1 \rangle}
\newcommand{\vvec}[1]{\overrightarrow{#1}}

\makeatletter
\def\imod#1{\allowbreak\mkern10mu({\operator@font mod}\,\,#1)} %modulo symbol
\def\@setcopyright{}                                           %to get rid of AMS junk at top and bottom of page 
\def\serieslogo@{}
\makeatother

\usepackage[pagewise]{lineno}

\begin{document}

%\linenumbers

\author[J.M.P.~Balmaceda]{Jose Maria P. ~Balmaceda}
\address[J.M.P.~Balmaceda]{Institute of Mathematics, University of the Philippines Diliman, 1101 Quezon City, Philippines}
\email{jpbalmaceda@up.edu.ph}

\author[D.V.A.~Briones]{Dom Vito A. ~Briones}
\address[D.V.A.~Briones, Corresponding author]{Institute of Mathematics, University of the Philippines Diliman, 1101 Quezon City, Philippines}
\email{dabriones@up.edu.ph}

\title{Families of Association Schemes on Triples from Two-Transitive Groups}

\begin{abstract} 
%In 1990, Mesner and Bhattacharya extended classical association schemes to association schemes on triples (ASTs), whose underlying relations and corresponding adjacency algebras are ternary instead of binary. In particular, they showed that the action of any two-transitive permutation group yields an AST. 
Association schemes on triples (ASTs) are ternary analogues of classical association schemes. Similar to how Schurian association schemes arise from transitive groups, ASTs arise from %the actions of 
two-transitive groups. 
In this paper, we obtain the third valencies and the number of relations of the ASTs obtained from %the 
two-transitive permutation groups. %by determining the orbits of the groups' two-point stabilizers. 
%Specifically, we obtain these parameters for the ASTs obtained from the actions of $S_n$ and $A_n$, $PGU(3,q)$, $PSU(3,q)$, and $Sp(2k,2)$, $Sz(2^{2k+1})$ and $Ree(3^{2k+1})$, some subgroups of $A\Gamma L(k,n)$, some subgroups of $P\Gamma L(k,n)$, and the sporadic two-transitive groups. 
Further, we obtain the intersection numbers for the ASTs obtained from $P\Gamma L(k,n)$, $PSL(2,n)$, $A \Gamma L(k,n)$, and the sporadic two-transitive groups. In particular, the ASTs from $P\Gamma L(k,n)$, $PSL(2,n)$, and the sporadic groups are commutative.
\end{abstract}

%\subjclass[2010]{Primary 52C07; Secondary 11H06, 82D25, 52C23}

\keywords{association scheme on triples, permutation group, ternary algebra, algebraic combinatorics \\ \indent MSC Classification: 05E30, 20B20}

\date{\today}

\maketitle

\section[Introduction]{Introduction}

 An association scheme is a set $X=\lr{R_i}_{i=0}^m$ of binary relations on a finite nonempty set $\Omega$ that partitions $\Omega \times \Omega$ and which satisfies certain symmetry requirements. These requirements are flexible enough to accommodate a variety of mathematical objects yet rigid enough to afford desirable algebraic and combinatorial properties. For instance, the adjacency algebras of association schemes and their duals under the Hadamard product are semisimple, and the parameters of these algebras may be used in the classification of certain types of graphs \cite{bannai_algebraic_1984}.

In the 1990 paper \cite{mesner_association_1990}, Mesner and Bhattacharya extended the notion of association schemes to association schemes on triples (ASTs). Here, the underlying relations are ternary instead of binary. By generalizing the usual $m\times m$ binary square matrix product $(A,B)\mapsto AB$ to an $m\times m \times m$ ternary cubic hypermatrix product $(A,B,C)\mapsto ABC $, the resulting adjacency hypermatrices form a 
non-associative ternary algebra. In the same paper, the authors showed that a two-transitive action of a group $G$ on a set $\Omega$ induces an AST, denoted $X=\lr{R_i}_{i=0}^m$, by letting the relations $
R_i$ in $X$ be the orbits of the induced action of $G$ on $\Omega \times \Omega \times \Omega$. 
In particular, the authors gave the sizes (number of relations) of the ASTs obtained from the affine general linear groups $AGL(1,n)$, the projective special linear groups $PSL(2,n)$, the Suzuki groups $Sz(2^{2k+1})$, and the Higman-Sims group $HS$. 
Moreover, they also determined some intersection numbers of the ASTs obtained from $AGL(1,n)$ and $PSL(2,n)$ by computing for some products of the adjacency hypermatrices of the corresponding ASTs.  

Other works on ASTs include generalizations of identity elements and inverse elements for the ternary algebras of ASTs \cite{mesner_ternary_1994}, ASTs with relations invariant under a transitive cyclic subgroup of the symmetric group \cite{Zealand2021}, an algorithm for classifying ASTs whose relations are invariant under a given group action \cite{balma2022}, and ASTs obtained by decomposing or taking the unions of relations of an existing AST \cite{balmasurvey}. However, much remains unknown for ASTs. For instance, it is not known whether the ternary algebras of ASTs satisfy analogues for the semisimplicity and duality properties of the adjacency algebras of classical association schemes.

To further the study of ASTs, we extend the work in \cite{mesner_association_1990} by obtaining the sizes and third valencies of the ASTs obtained from the two-transitive permutation groups. These are given in Theorem \ref{main1}.

\begin{thm}\label{main1}
 The third valencies and number of relations of the ASTs obtained from the symmetric groups $S_k$, the projective semilinear groups $P\Gamma L(k,n)$, the projective special linear groups $PSL(2,n)$, the Suzuki groups $Sz(2^{2k+1})$, the Ree groups $Ree(3^{2k+1})$, the affine semilinear groups $A\Gamma L(k,n)$, the projective unitary groups $PGU(3,n)$, the symplectic groups $Sp(2k,2)$, and the sporadic two-transitive groups are given in Table \ref{tab:thirdvalencies_and_sizes}.  
\end{thm}

Theorem \ref{main1} is obtained by determining the orbits of the groups' two-point stabilizers. Indeed, the number of nontrivial relations of an AST obtained from a two-transitive group is equal to the number of orbits of a two-point stabilizer while the third valencies are the sizes of these orbits. 

Furthermore, by determining which elements of the underlying space can be sent to one another through certain elements of the acting group, we extend some of the results in \cite{mesner_association_1990} regarding the intersection numbers of ASTs obtained from $PSL(2,n)$ and $AGL(1,n)$ to the intersection numbers of ASTs obtained from $P\Gamma L(k,n)$ and $A \Gamma L(k,n)$. Additionally, we complete the intersection numbers of ASTs from $PSL(2,n)$ which were partially obtained in \cite{mesner_association_1990}. Through GAP 4.11.1 \cite{GAP4}, we also determine the intersection numbers of the ASTs obtained from the sporadic two-transitive groups. These intersection numbers correct some errors in \cite{mesner_association_1990} and are given in Theorem \ref{main2}.

\begin{thm}\label{main2}
    The intersection numbers of the ASTs obtained from the affine semilinear groups $A\Gamma L(k,n)$, the projective groups $P\Gamma L(k,n)$ and $PSL(2,n)$, and the sporadic two-transitive groups are given in Tables \ref{tab:pgaml} to \ref{tab:CO}.
\end{thm}

%In particular, the ASTs from the projective and sporadic groups in Theorem \ref{main2} are commutative.

\section{Preliminaries}
This section is based mostly on \cite{mesner_association_1990} and \cite{dixon1996permutation}. We define association schemes on triples (ASTs), state relevant properties of ASTs, and describe relationships between ASTs and two-transitive groups. 

\subsection{Association Schemes on Triples}

Similar to a classical association scheme, an AST on a set $\Omega$ is a set of ternary relations $X=\lr{R_i}_{i=0}^m$ that partitions $\Omega \times \Omega \times \Omega$ and which satisfies certain symmetry requirements. More precisely, we have the following definition.

\begin{defi}\label{def_AST}
Let $\Omega$ be a finite set with at least 3 elements. An association scheme on triples (AST) on $\Omega$ is a partition $X=\lr{R_i}_{i=0}^m$ of $\Omega \times \Omega \times \Omega$ with $m\geq 4$ such that the following conditions hold.

\begin{enumerate}
    \item  For each $i\in \lr{0,\ldots,m}$, there exists an integer $n_i^{(3)}$ such that for each pair of distinct $x,y\in \Omega$, the number of $z\in \Omega$ with $(x,y,z)\in R_i$ is $n_i^{(3)}$.
    \item (Principal Regularity Condition.) For any $i,j,k,l \in \lr{0,\ldots,m}$, there exists a constant $p_{ijk}^l$ such that for any $(x,y,z)\in R_l$, the number of $w$ such that $(w,y,z)\in R_i$, $(x,w,z)\in R_j$, and $(x,y,w)\in R_k$ is $p_{ijk}^l$.
    \item For any $i\in \lr{0,\ldots, m}$ and any $\sigma \in S_3$, there exists a $j\in \lr{0,\ldots,m}$ such that \[R_j= \lr{(x_{\sigma(1)},x_{\sigma(2)},x_{\sigma(3)}):(x_1,x_2,x_3)\in R_i}.\]
    \item The first four relations are $R_0=\lr{(x,x,x): x\in \Omega}$, $R_1=\lr{(x,y,y):x,y\in \Omega,x\neq y}$, $R_2=\lr{(y,x,y):x,y\in \Omega, x\neq y}$, and $R_3=\lr{(y,y,x):x,y\in \Omega, x\neq y}$.
\end{enumerate}
\end{defi}
Analogous to the valency of a classical association scheme, we name $n_i^{(3)}$ the third valency of $R_i$. By conditions 1 and 3, there exist for each $i$ the constants $n_i^{(1)}=\vert \lr{z\in \Omega:(z,x,y) \in R_i}\vert $ and $n_i^{(2)}=\vert \lr{z\in \Omega:(x,z,y) \in R_i}\vert $ independent of any pair of distinct elements $x,y\in \Omega$. We term $n_i^{(1)}$ and $n_i^{(2)}$ the first and second valency of $R_i$, respectively. We call $p_{ijk}^l$ the intersection number corresponding to $R_i$, $R_j$, $R_k$, and $R_l$. The relations $R_0$, $R_1$, $R_2$, and $R_3$ are called the trivial relations. The other relations are the nontrivial relations.  %When it is clear in context which $R_i$, $R_j$, and $R_k$ are referred to, we sometimes denote $p_{ijk}^l$ by $p^l$ [placeholder did we use this]. 

Similar to classical association schemes, we can represent an AST in terms of hypermatrices. 
Let $\nu=\vert \Omega\vert $. To each $R_i\in X$ we associate the $\nu \times \nu \times \nu$ cubic hypermatrix $A_i$ whose entries are indexed by $\Omega$. For $i\in\{0,\ldots,m\}$, define $A_i$ by \[{(A_i)}_{xyz}=\begin{cases} 1, &\text{if }(x,y,z)\in R_i, \\ 0, & \text{otherwise.}\end{cases}\] Here, $(A_i)_{xyz}$ is the $(x,y,z)$-entry of $A_i$. To emphasize their particular nature, the hypermatrices $A_0, A_1, A_2$, and $A_3$ corresponding to the trivial relations $R_0, R_1, R_2$, and $R_3$ may also be denoted by $I_0, I_1, I_2$, and $I_3$, respectively.

The set of $\nu \times \nu \times \nu$ hypermatrices with complex entries indexed by $\Omega$ forms a complex vector space under componentwise addition and scalar multiplication. To provide it with a ternary algebra structure, we have the following ternary operation.

\begin{defi} \label{def_ternprod}
Let $A$, $B$, and $C$ be $\nu \times \nu \times \nu$ hypermatrices with complex entries indexed by $\Omega$. The ternary product $D=ABC$ is the $\nu \times \nu \times \nu$ hypermatrix given by \[(D)_{xyz}=\sum_{w\in \Omega}(A)_{wyz} (B)_{xwz} (C)_{xyw}.\]
\end{defi}

This ternary operation is multilinear and generally not associative \cite{mesner_association_1990} (in the sense of Lister in \cite{lister1971ternary}). With this ternary operation, the vector space generated by the hypermatrices $\lr{A_i}_{i=0}^m$ of an AST $X=\lr{R_i}_{i=0}^m$ becomes a ternary algebra. The structure constants of this ternary algebra %(with basis $\lr{A_i}_{i=0}^m$) 
are the intersection numbers $p_{ijk}^l$, as stated in the following theorem.

\begin{thm}[Theorem 1.4, \cite{mesner_association_1990}]\label{thm_adjhyperprod}
Let $X=\lr{R_i}_{i=0}^m$ be an AST on a set $\Omega$ and $\lr{A_i}_{i=0}^m$ be the corresponding adjacency hypermatrices. Then for any $i,j,k\in \lr{0,\ldots,m}$, one has \[A_i A_j A_k = \sum_{l=0}^m p_{ijk}^l A_l.\]
\end{thm}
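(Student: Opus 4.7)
The plan is to verify the identity entrywise by unpacking the definition of the ternary product and appealing directly to the principal regularity condition. Concretely, I would fix indices $i,j,k \in \{0,\ldots,m\}$ and an arbitrary triple $(x,y,z) \in \Omega^3$, and compute both sides of the claimed equation at the entry $(x,y,z)$.

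First, by Definition~\ref{def_ternprod},
\[
(A_i A_j A_k)_{xyz} \;=\; \sum_{w \in \Omega} (A_i)_{wyz}\,(A_j)_{xwz}\,(A_k)_{xyw}.
\]
Since each adjacency hypermatrix is $0/1$-valued, the summand corresponding to $w$ is $1$ precisely when $(w,y,z) \in R_i$, $(x,w,z) \in R_j$, and $(x,y,w) \in R_k$ hold simultaneously, and is $0$ otherwise. Thus $(A_i A_j A_k)_{xyz}$ equals the cardinality
\[
\bigl|\{\,w \in \Omega : (w,y,z) \in R_i,\ (x,w,z) \in R_j,\ (x,y,w) \in R_k\,\}\bigr|.
\]

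Next, because $X=\{R_i\}_{i=0}^m$ is a partition of $\Omega \times \Omega \times \Omega$, the triple $(x,y,z)$ lies in exactly one relation, say $R_l$. The principal regularity condition (part (2) of Definition~\ref{def_AST}) then asserts that the count displayed above depends only on $l$ and equals $p_{ijk}^l$. Hence $(A_i A_j A_k)_{xyz} = p_{ijk}^l$. On the other hand, the $(x,y,z)$-entry of $\sum_{l'=0}^m p_{ijk}^{l'} A_{l'}$ is $\sum_{l'} p_{ijk}^{l'} (A_{l'})_{xyz}$, and exactly one summand survives, namely the one corresponding to the unique $l$ with $(x,y,z) \in R_l$, yielding again $p_{ijk}^l$. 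Since the two hypermatrices agree in every entry, they are equal.

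The proof is essentially an unwinding of definitions, so I do not anticipate any serious obstacle; the only point that requires care is noting that the partition property guarantees the index $l$ is well defined, so that the phrase ``$p_{ijk}^l$ where $(x,y,z) \in R_l$'' is unambiguous. This is what makes the principal regularity condition directly applicable.
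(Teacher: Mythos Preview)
Your argument is correct: the entrywise computation together with the partition property and the principal regularity condition is exactly what is needed, and there are no gaps. Note that the paper does not supply its own proof of this theorem but simply cites it from \cite{mesner_association_1990}; your direct unwinding of the definitions is the standard (and essentially only) way to establish it.
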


This theorem allows us to state results regarding the intersection numbers of ASTs via expressing them as equations in the adjacency hypermatrices. In relation to this, we state the following theorem from \cite{mesner_association_1990}, which tells us that the adjacency matrices of the nontrivial relations generate a ternary subalgebra of the adjacency algebra. In particular, $p_{ijk}^l =0$ when $i,j,k>3$, and $l\leq 3$.

\begin{thm}[Corollary 2.8, \cite{mesner_association_1990}] \label{thm_subalge}
In an AST with relations $\lr{R_i}_{i=0}^m$, the adjacency hypermatrices $\lr{A_i}_{i=4}^m$ of the nontrivial relations generate a ternary subalgebra of the ternary algebra generated by the adjacency hypermatrices $\lr{A_i}_{i=0}^m$.
\end{thm}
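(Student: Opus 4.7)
The plan is to prove the stronger quantitative statement that $p_{ijk}^l = 0$ whenever $i,j,k \geq 4$ and $l \in \{0,1,2,3\}$. By Theorem \ref{thm_adjhyperprod}, this immediately gives that $A_i A_j A_k$ lies in the span of $\{A_l\}_{l=4}^m$ whenever $i,j,k \geq 4$, which is exactly the subalgebra claim.

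My approach is a case analysis on the trivial index $l$, driven by a single observation: if $(x,y,z) \in R_l$ with $l \in \{0,1,2,3\}$, then at least two of the coordinates $x,y,z$ coincide, and this forces at least one of the three triples $(w,y,z)$, $(x,w,z)$, $(x,y,w)$ to itself have two coordinates that are equal to each other independent of $w$, hence to lie in a trivial relation. For instance, if $l=1$, then $y=z$, so $(w,y,z)=(w,y,y)$ is in $R_0$ when $w=y$ and in $R_1$ when $w\neq y$; in either case it is trivial, so no $w$ can make it lie in $R_i$ with $i\geq 4$. If $l=2$, then $x=z$ and $(x,w,z)=(x,w,x)$ is always in $R_0 \cup R_2$. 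If $l=3$, then $x=y$ and $(x,y,w)=(x,x,w)$ is always in $R_0 \cup R_3$. Finally if $l=0$, then $x=y=z$ and all three triples degenerate to points of $R_0 \cup R_1 \cup R_2 \cup R_3$ for every $w$.

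Packaging these observations, the count $p_{ijk}^l$ is zero for each trivial $l$ as soon as the corresponding one of $i,j,k$ is $\geq 4$, so in particular whenever all three of $i,j,k$ are $\geq 4$. Substituting back into the expansion of Theorem \ref{thm_adjhyperprod} yields
\[
A_i A_j A_k \;=\; \sum_{l=4}^{m} p_{ijk}^l \, A_l \qquad (i,j,k \geq 4),
\]
establishing closure under the ternary product.

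There is no serious obstacle here; the argument is purely combinatorial and mirrors Remark \ref{rem_trivial_prod}, which already records the analogous vanishing when exactly one of $i,j,k$ is trivial. The only mildly delicate point is to be careful that the vanishing of a single coordinate-shape pattern indeed rules out membership in every nontrivial relation simultaneously, which is immediate from Definition \ref{def_AST}(4) since the trivial relations exhaust all triples with a repeated coordinate.
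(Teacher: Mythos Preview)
Your argument is correct: the case analysis on $l\in\{0,1,2,3\}$ shows that any triple $(x,y,z)\in R_l$ has a repeated coordinate, which forces one of $(w,y,z)$, $(x,w,z)$, $(x,y,w)$ to lie in $R_0\cup R_1\cup R_2\cup R_3$ for every $w$, and hence $p_{ijk}^l=0$ whenever $i,j,k\geq 4$.

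There is nothing to compare against in this paper, however. The theorem is quoted as Corollary~2.8 of \cite{mesner_association_1990} and is not proved here; the paper only records the consequence $p_{ijk}^l=0$ for $i,j,k>3$, $l\leq 3$ in the sentence following the statement. Your proof is the standard one and is essentially the same observation that drives Remark~\ref{rem_trivial_prod}, so it is entirely in keeping with the paper's level of argument.
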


According to \cite{mesner_association_1990}, the most interesting intersection numbers are those that arise from the subalgebra in Theorem \ref{thm_subalge}. Indeed, other theorems and remarks in \cite{mesner_association_1990} provide values and restrictions for $p_{ijk}^l$ when $\lr{i,j,k,l} \cap \lr{0,1,2,3} \neq \varnothing$. For instance, we state without proof the following consequences of Proposition 2.7 of \cite{mesner_association_1990}.

\begin{lemma}\label{ex_int5}\label{rem_trivial_prod}
    
Let $X=\lr{R_i}_{i=0}^m$ be an AST on a set $\Omega$. The following statements hold.
\begin{enumerate}
    \item Let $R_i$ be a trivial relation, $R_j$ and $R_k$ be nontrivial relations, and $R_l$ be any relation. Then the only intersection numbers of the form $p_{ijk}^l$, $p_{jik}^l$, or $p_{jki}^l$ that may be nonzero are $p_{1jk}^1$, $p_{j2k}^2$, and $p_{jk3}^3$. 
    \item If $m=4$, then $A_4 A_4 A_4=(\vert \Omega\vert -3)A_4$, $I_1 A_4 A_4 = (\vert \Omega\vert -2) I_1$, $A_4 I_2 A_4 = (\vert \Omega\vert -2) I_2$, and $A_4 A_4 I_3 = (\vert \Omega\vert -2) I_3$. Moreover, $n_4^{(3)}=\vert \Omega\vert-2.$
\end{enumerate}

\end{lemma}

\subsection{Two-Transitive Permutation Groups and ASTs}
Transitive group actions induce classical association schemes called Schurian association schemes. The theorem below states that any two-transitive group action analogously produces ASTs.

\begin{thm}[Theorem 4.1, \cite{mesner_association_1990}]\label{thm_ASTconstruct}
Let $G$ be a two-transitive group acting on a set $\Omega$. Then the orbits of the natural action of $G$ on $\Omega \times \Omega \times \Omega$ form the relations of an AST $X$ on $\Omega$. 

\end{thm}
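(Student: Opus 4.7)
The plan is to verify the four conditions of Definition \ref{def_AST} directly, exploiting the fact that $G$ acts on $\Omega \times \Omega \times \Omega$ as automorphisms of every $G$-invariant subset. Since orbits of any group action automatically form a partition, the decomposition of $\Omega^3$ into $G$-orbits is a partition without further work, and the content of the theorem is conditions (1)--(4).

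For condition (4), transitivity of $G$ on $\Omega$ shows that the diagonal $\{(x,x,x) : x \in \Omega\}$ is a single $G$-orbit, while 2-transitivity of $G$ on ordered pairs of distinct points shows that each of $\{(x,y,y) : x \neq y\}$, $\{(y,x,y) : x \neq y\}$, and $\{(y,y,x) : x \neq y\}$ is a single orbit. These are precisely the candidates for $R_0,R_1,R_2,R_3$. The bound $m \geq 4$ then follows because $|\Omega| \geq 3$ supplies a triple with three pairwise distinct coordinates, which lies outside all four trivial orbits and hence lives in at least one further orbit.

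For condition (3), the observation is that the componentwise action of $G$ on $\Omega^3$ commutes with the $S_3$-action permuting coordinates. Consequently, the image of a $G$-orbit $R_i$ under any coordinate permutation is again a $G$-orbit, hence equals some $R_j$.

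Conditions (1) and (2) share a single mechanism: whenever some $g \in G$ maps one ``base'' configuration to another, it induces a bijection between the finite sets being counted, since each $R_i$ is $G$-invariant. For condition (1), 2-transitivity supplies $g$ with $g(x) = x'$ and $g(y) = y'$ for any two pairs of distinct points, and then $z \mapsto gz$ is a bijection between $\{z : (x,y,z) \in R_i\}$ and $\{z : (x',y',z) \in R_i\}$. For condition (2), transitivity of $G$ on the orbit $R_l$ supplies $g$ with $g(x,y,z) = (x',y',z')$, and $w \mapsto gw$ is a bijection between the $w$-sets defined by the three simultaneous relations. The main (essentially notational) obstacle is checking for (2) that all three incidence constraints transport at once under $g$; this reduces to writing $g \cdot (w,y,z) = (gw, y', z')$ and similarly for the other two, at which point $G$-invariance of $R_i$, $R_j$, $R_k$ finishes the argument coordinate-by-coordinate.
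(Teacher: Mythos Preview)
Your verification of conditions (1)--(4) is correct and complete: the commuting $S_3$ and $G$ actions handle condition (3), two-transitivity gives the transport bijections needed for (1) and (2), and the trivial-relation check for (4) together with the existence of a triple of pairwise distinct points gives $m \geq 4$. However, the paper does not supply its own proof of this theorem; it is quoted from \cite{mesner_association_1990} without argument, so there is no in-paper proof to compare your approach against.
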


Notice that $X$ has only one nontrivial relation if and only if all triples of pairwise distinct elements belong to the same orbit. This yields the following remark.

\begin{rem}\label{rem_oneNontrivial}
The AST $X$ obtained from a two-transitive action of a group $G$ on $\Omega$ has only one nontrivial relation if and only if $G$ acts three-transitively on $\Omega$. 
\end{rem}

Further, if $G$ is a group acting on a set $\Omega$ and $H$ is a subgroup of $G$, then the orbits of $G$ on $\Omega$ are unions of orbits of $H$. This bears the following consequence.

\begin{rem}\label{rem_orbit_implies_equality}
If $G$ is a group acting two-transitively on a set $\Omega$, and the restriction of the action to a subgroup $H$ of $G$ remains two-transitive on $\Omega$, then the ASTs obtained from $G$ and from $H$ are equal if and only if the ASTs have the same number of relations.

\end{rem}

We include here a lemma from \cite{mesner_association_1990} that is useful in determining the sizes of the ASTs obtained from two-transitive groups.  

\begin{lemma}[Lemma 4.2, \cite{mesner_association_1990}]\label{lem_mesner_orbit} 
Let $G$ be a two-transitive group acting on a set $\Omega$. Let $\Delta=[(x,y,z)]$ be an orbit of $G$ on $\Omega \times \Omega \times \Omega$ with $x,y$, and $z$ pairwise distinct. For $a,b \in \Omega$, $a\neq b$, let \[\Delta(a,b)=\lr{c:(a,b,c)\in \Delta}.\] Then $\Delta(a,b)$ is a $G_{a,b}$-orbit on $\Omega\setminus \lr{a,b}$. Furthermore, the map $\Delta \mapsto \Delta(a,b)$ is a bijection between the nontrivial $G$-orbits on $\Omega\times \Omega \times \Omega$ and the $G_{a,b}$-orbits on $\Omega \setminus \lr{a,b}$.
\end{lemma}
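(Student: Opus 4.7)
The plan is to proceed in two stages: first establish that $\Delta(a,b)$ really is a $G_{a,b}$-orbit on $\Omega \setminus \{a,b\}$, and then check that the map $\Delta \mapsto \Delta(a,b)$ is both injective and surjective onto the set of $G_{a,b}$-orbits.

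For the first stage, I would begin by noting $\Delta(a,b) \subseteq \Omega \setminus \{a,b\}$: any triple in $\Delta$ has pairwise distinct entries (since $(x,y,z)$ does and $G$ acts by bijections), so $(a,b,c) \in \Delta$ forces $c \notin \{a,b\}$. Next, $\Delta(a,b)$ is nonempty because two-transitivity of $G$ on $\Omega$ produces $g \in G$ with $g(x)=a$ and $g(y)=b$, whence $g(z) \in \Delta(a,b)$. Invariance under $G_{a,b}$ is immediate: for $h \in G_{a,b}$ and $c \in \Delta(a,b)$, $h \cdot (a,b,c) = (a,b,h(c)) \in \Delta$. Transitivity of the $G_{a,b}$-action on $\Delta(a,b)$ is the key point: given $c,c' \in \Delta(a,b)$, both $(a,b,c)$ and $(a,b,c')$ lie in the single $G$-orbit $\Delta$, so some $g \in G$ sends one to the other; but $g$ fixes $a$ and $b$ by construction, so $g \in G_{a,b}$ and $g(c)=c'$.

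For the second stage, injectivity follows since the $G$-orbits partition $\Omega^3$: if $\Delta(a,b) = \Delta'(a,b)$ and $c$ is any element of this common set, then $(a,b,c) \in \Delta \cap \Delta'$, forcing $\Delta = \Delta'$. For surjectivity, take any $G_{a,b}$-orbit $O$ on $\Omega \setminus \{a,b\}$, pick $c \in O$, and let $\Delta = [(a,b,c)]$; this is a nontrivial $G$-orbit because $a,b,c$ are pairwise distinct. The inclusion $O \subseteq \Delta(a,b)$ holds because $G_{a,b} \subseteq G$, and the reverse inclusion follows from the transitivity argument in stage one applied to the pair $c, c'$ for any $c' \in \Delta(a,b)$.

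I do not expect a genuine obstacle here: the argument is essentially a bookkeeping exercise built around the fact that two-transitivity lets us move any pair of distinct points to $(a,b)$, and once we have done so the remaining freedom is exactly the stabilizer $G_{a,b}$. The only place where one must be a bit careful is to invoke the pairwise distinctness of the entries of $(x,y,z)$ to ensure $\Delta(a,b)$ lands inside $\Omega \setminus \{a,b\}$ rather than all of $\Omega$, and to restrict to nontrivial orbits on the source side of the bijection so that the preimage construction in the surjectivity step actually produces something pairwise distinct.
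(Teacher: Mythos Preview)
Your proof is correct and follows essentially the same approach as the paper's: both use two-transitivity to produce an element of $\Delta(a,b)$, identify $\Delta(a,b)$ with a $G_{a,b}$-orbit via the observation that any $g\in G$ sending $(a,b,c)$ to $(a,b,c')$ must lie in $G_{a,b}$, and then verify injectivity and surjectivity of $\Delta\mapsto\Delta(a,b)$ directly. Your write-up is a bit more explicit (e.g., checking $\Delta(a,b)\subseteq\Omega\setminus\{a,b\}$), but there is no substantive difference in method.
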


The following remark gives additional information obtained from Lemma \ref{lem_mesner_orbit} and the bijection $\Delta \mapsto \Delta(a,b)$.

\begin{rem} \label{rem_representative}
    Let $G$ be a group acting two-transitively on a set $\Omega$ and $X=\lr{R_i}_{i=0}^m$ be the induced AST.

    \begin{enumerate}
        \item For $a,b\in \Omega$ with $a\neq b$, the representatives of $R_i=[(a,b,c)]$ of the form $(a,b,d)$ for some $d\in \Omega$ are those triples with $d$ in the orbit of $c$ under the action of $G_{a,b}$ on $\Omega \setminus \lr{a,b}$. In particular, $n_i^{(3)}$ is the size of the orbit of $c$ under the action of $G_{a,b}$.  
        \item By varying the first coordinate or the second coordinate instead of the third coordinate, we obtain the values of the first and second valencies $n_i^{(1)}$ and $n_i^{(2)}$. Indeed, given nontrivial relations $R_i=[(a,b,c)]$, $R_j=[(a,c,b)]$, and $R_k=[(c,a,b)]$, the valencies $n_i^{(3)}=n_j^{(2)}=n_k^{(1)}$ are all equal to the size of the orbit of $c$ under the action of $G_{a,b}$.
    \end{enumerate}
\end{rem}

As a particular application of Lemma \ref{lem_mesner_orbit}, the authors of \cite{mesner_association_1990} determined that the size of the AST obtained from $PSL(2,n)$ is 5 if $n$ is even and $6$ if $n$ is odd. Further, they showed that the size of the AST obtained from $AGL(1,n)$ is precisely $n+2$ for $n$ a prime power larger than 2. They also determined intersection numbers of these ASTs in terms of their adjacency hypermatrices. 
Moreover, they determined that the size of the AST obtained from the Suzuki group $Sz(2^{2k+1})$ is $2^{2k+1}+5$ while the size of the AST obtained from the Higman-Sims group $HS$ is $7$. 

\section{ASTs from two-transitive groups}
We prove Theorems \ref{main1} and \ref{main2} in this section. A list of the groups considered and their explicit descriptions can be found in \cite{dixon1996permutation}. 
 For each group, we obtain a two-point stabilizer and the orbits of this stabilizer. Lemma \ref{lem_mesner_orbit} and Remark \ref{rem_representative} then yield the nontrivial relations and the third valencies of the corresponding ASTs. The third valencies of the nontrivial relations and the total number of relations of the ASTs are given in Table \ref{tab:thirdvalencies_and_sizes}.
 Meanwhile, by working with the orbits of the groups on the underlying Cartesian triple product, we obtain the intersection numbers of the ASTs listed in Theorem \ref{main2}. These are summarized in Tables \ref{tab:pgaml} to \ref{tab:CO}.
 The tables utilize definitions and notations from \cite{dixon1996permutation}, \cite{mesner_association_1990}, and the relevant subsections below. 

%\afterpage{%
   % \clearpage% Flush earlier floats (otherwise order might not be correct)
    %\thispagestyle{empty}% empty page style (?)
   % \begin{landscape}% Landscape page
        
\begin{table}[htbp!]
    \centering
    \begin{tabular}{|c|c|c|}
    
    \hline 
    \textbf{Group}     & \textbf{Third Valencies} & \textbf{AST Size}\\ \hline
     $S_k$, $k\geq 3$   & $k-2$ & 5\\ \hline
     
     $P\Gamma L(k,n)$, $k\geq 3$      & $n-1$, $\frac{n^2(n^{k-2}-1)}{n-1}$   & 6\\ \hline
         
     $PSL(2,n)$, $n$ odd     & $\frac{n-1}{2}$ & 6\\ \hline
     
     $A\Gamma L(k,n)$, $k\geq 2$    &$\operatorname{deg}_{GF(p)}(a)$ where $a\in GF(n)$, $n^k-n$ & $  3 + \frac{1}{\alpha}\sum_{\beta=1}^\alpha p^{\gcd(\alpha,\beta)}$\\\hline

   $Sz(2^{2k+1})$, $k\geq 0$      &  $2^{2k+1}-1$  & $2^{2k+1}+5$\\\hline
          
   $Ree(3^{2k+1})$, $k\geq 0$      &   $3^{2k+1}-1$, $\frac{3^{2k+1}-1}{2}$   & $(3^{2k+1})^2+3^{2k+1}+6$\\ \hline
         
       $PGU(3,n)$  & $n^2-1$, $n-1$ &$n+5$\\ \hline

        $Sp^\varepsilon(2k,2)$, $k\geq 3$ &  $2^{2k-2}+\varepsilon2^{k-1}-2$, $2^{2k-2}$  & 6\\ \hline
        $PSL(2,11)$ (degree $11$)&3, 6 &6  \\ \hline
         $A_7$ (degree $15$) &1, 12&6 \\ \hline
         $HS$&12, 72, 90&7\\ \hline
           $Co_3$&112, 162 &6\\ \hline
    \end{tabular}

\medskip
    
    \caption{Third valencies and number of relations of ASTs from two-transitive groups}
    \label{tab:thirdvalencies_and_sizes}
\end{table}

\subsection{\texorpdfstring{${S_k}$}{Sk}}
Let $k\geq 3$. From \cite{dixon1996permutation}, it is known that $S_k$ is $k$-transitive. Remark \ref{rem_oneNontrivial} implies the associated AST has size $m=4$. Thus, Lemma \ref{ex_int5} describes the parameters of this AST. %In particular, there is only one nontrivial relation $R_4$, $n_4^{(3)}=k-2$ is the third valency of $R_4$, and $p_{444}^4=n-3$ is the intersection number for the subalgebra generated by adjacency hypermatrix $A_4$ of $R_4$.

\begin{rem}
Remark \ref{rem_orbit_implies_equality} ensures that the AST obtained from any group acting three-transitively on a set $\Omega$ with $k\geq 3$ points is equal to the AST obtained from the permutation group $S_{\Omega}$ acting on $\Omega$. Such groups include $PGL(2,n)$ for $n$ a prime power, $PSL(2,n)$ for $n$ an even prime power, $A_k$ for $k\geq 5$, and the Mathieu groups. Accordingly, we refrain from tabulating the parameters of the ASTs from these groups.   
\end{rem}

\subsection{\texorpdfstring{${P\Gamma L(k,n)}$}{PGamL(k,n)}} Let $k\geq 3$ and $n$ be a power of a prime. For each homogeneous vector $y=[y_1:\cdots:y_k]^T\in PG(k-1,n)$, denote by $y'$ the affine vector $\frac{1}{y_j}(y_1,y_2,\ldots,y_k)^T$, where $j$ is the first coordinate such that $y_j\neq 0$. Further,
denote by $u$, $v$, $w$, and $x$ the respective elements  $[1:0:\cdots:0]^T, [0:1:0:\cdots:0]^T$, $[1:1:0\cdots:0]^T$, and $[0:0:1:0:\cdots:0]^T$ of $PG(k-1,n)$. 

Direct computation reveals that the two-point stabilizer $P\Gamma L(k,n)_{u,v}$ consists of the maps $z\mapsto A \phi(z)$ for some $A\in GL(k,n)$ and $\phi \in Gal(GF(n))$. Further computations yield the two orbits of $P\Gamma L(k,n)_{u,v}$, namely $\{[1:c:0:\cdots:0]^T:c\neq 0\}$ of size $n-1$ and $\{[c_1:\cdots:c_n]^T:(c_3,\ldots,c_k)\neq (0,\ldots,0)\}$ of size $\frac{n^2(n^{k-2}-1)}{n-1}$.

We now obtain the intersection numbers of the AST $X=\{R_i\}_{i=0}^5$ from $P\Gamma L(k,n)$. For clarity, let $R^w=[(u,v,w)]$ and $R^x=[(u,v,x)]$. Correspondingly, let $A^w$ and $A^x$ denote the respective adjacency hypermatrices. We first prove that $A^x A^x A^x = \frac{(n^{k-2}-1)n^2}{n-1}A^w + (\frac{n^k-1}{n-1}-3n)A^x$. 
Theorems \ref{thm_adjhyperprod} and \ref{thm_subalge} imply that it suffices to determine $p_{ijk}^l$ with $R_i=R_j=R_k=R^x$ and $R_l\in\{R^w,R^x\}$. For this, we utilize the principal regularity condition from Definition \ref{def_AST}.  
Initially, consider $R_l=R^w$ and $(u,v,w)\in R_l$. Remark \ref{rem_representative} implies that $(u,v,z)\in [(u,v,x)]$ is equivalent to $z=[x_1:\cdots:x_k]^T$ where $(x_3,\ldots,x_k)\neq(0,\ldots,0)$. Moreover, $(u,z,w)\in [(u,v,x)]$ is equivalent to the existence of $B \in GL(k,n)$ such that $B(u,v,x)=(u,z,w)$. Such a $B$ is characterized by having nonzero multiples of $u'$, $z'$, and $w'$ as its first three columns, so $B$ exists provided these three are linearly independent. Similarly, $(z,v,w)\in[(u,v,x)]$ is equivalent to the existence of $A\in GL(k,n)$ such that $A(u,v,x)=(z,v,w)$. Such an $A$ exists if and only if $v'$, $z'$ and $w'$ are linearly independent. Thus, the $z$ satisfying the above inclusions are those of the form $z=[x_1:\cdots:x_k]^T$, where $(x_3,\ldots,x_k)\neq(0,\ldots,0)$. This yields $p_{ijk}^l=\frac{(n^{k-2}-1)n^2}{n-1}$ values for $z$ when $R_l=R^w$. Similarly, $p_{ijk}^l=\frac{n^k-1}{n-1}-3n$ when $R_l=R^x$. 
%$(u,v,z)\in [(u,v,w)]$ is equivalent to $z=[1:c:0:\cdots:0]^T$ for some $c\neq 0$. Additionally, $(u,z,w)\in [(u,v,w)]$ is equivalent to the existence of $B \in GL(k,n)$ such that $B(u,v,w)=(u,z,w)$. Such a $B$ is characterized as having its first two columns be nonzero multiples of $u'$ and $z'$ which sum to a nonzero multiple of $w'$. In particular, when $z=[1:c:0:\cdots:0]^T$ for some $c\neq 0$, such a $B$ exists if and only if $c\neq 1$. Similarly, $(z,v,w)\in [(u,v,w)]$ is equivalent to the existence of $A\in GL(k,n)$ whose first two columns are nonzero multiples of $z'$ and $v'$ which sum to a nonzero multiple of $w'$. In particular, such an $A$ exists if $z=[1:c:0:\cdots:0]^T$ for some $c\neq 0,1$. Hence, there are $n-2$ different values of $c$ such that $z=[1:c:0:\cdots:0]^T$ satisfies all the above inclusions. By the principal regularity condition, $p_{ijk}^l=n-2$ when $R_l=R^w$. Similarly, $p_{ijk}^l=0$ when $R_l=R^x$. 
This proves the claim. The other intersection numbers are obtained similarly and are located in Table \ref{tab:pgaml}.

\begin{table}[htbp!]
    \centering
    \begin{tabular}{|c|}

\hline
 $A^w A^w A^w = (n-2)A^w$ \\ \hline
 $A^w A^w A^x =A^w A^x A^w = A^x A^w A^w =0$ \\ \hline
 $A^w A^x A^x = A^x A^w A^x =A^x A^x A^w =(n-1)A^x$ \\ \hline
 $A^x A^x A^x = \frac{(n^{k-2}-1)n^2}{n-1}A^w + (\frac{n^k-1}{n-1}-3n)A^x$   \\  \hline

$I_1 A^w A^w = (n-1) I_1$\\  \hline
 $ A^w I_2 A^w = (n-1) I_2$ \\ \hline
$ A^w A^w  I_3 = (n-1) I_3$\\ \hline
 $I_1 A^w A^x = I_1 A^x A^w = 0$\\ \hline
 $A^w I_2 A^x = A^x I_2 A^w = 0$\\ \hline
 $A^w A^x  I_3 =A^x A^w  I_3 = 0$ \\ \hline
$I_1 A^x A^x = \frac{n^2(n^{k-2}-1)}{n-1} I_1$ \\ \hline
 $ A^x I_2 A^x = \frac{n^2(n^{k-2}-1)}{n-1} I_2$\\ \hline
$ A^x A^x  I_3 = \frac{n^2(n^{k-2}-1)}{n-1} I_3$ \\\hline

    \end{tabular}

\medskip
    
    \caption{Intersection numbers of ASTs from $P\Gamma L(k,n)$, $k\geq 3$}
    \label{tab:pgaml}
\end{table}

\begin{rem} Let $k\geq 3$ and $n$ be a prime power.
\begin{enumerate}
    \item The ternary subalgebra spanned by the adjacency hypermatrices $A^w$ and $A^x$ of the AST from $P\Gamma L(k,n)$ is a commutative ternary algebra. Moreover, $A^w$ spans its own ternary subalgebra.
    \item Let $H$ be a subgroup of $P\Gamma L(k,n)$ containing $PSL(k,n)$. It can be verified that the orbits of $H_{u,v}$ are the same as the orbits of $P\Gamma L(k,n)_{u,v}$. By Lemma \ref{lem_mesner_orbit} and Remark \ref{rem_orbit_implies_equality}, the ASTs obtained from $H$ and $P\Gamma L(k,n)$ are equal. 
\end{enumerate}

\end{rem}

\subsection{\texorpdfstring{${PSL(2,n)}$}{PSL(2,n)}}
Let $n$ be an an odd prime power and fix a quadratic nonresidue $\eta \in GF(n)$. Further, take $u=[1:0]^T$, $v=[0:1]^T$, $w=[1:1]^T$, and $s=[\eta:1]^T$ from $PG(1,n)$. Lemma 4.3 of \cite{mesner_association_1990} states that the two-point stabilizer $PSL(k,n)_{u,v}$ has two equally-sized orbits. These are $\{[a:1]^T:a \text{ a quadratic non-residue}\}$ and $\{[a:1]^T:a \text{ a quadratic residue, } a\neq 0,1\}$.

It follows that the AST obtained from $PSL(2,n)$ has two nontrivial relations $R^w=[(u,v,w)]$ and $R^s=[(u,v,s)]$. Letting $A^w$ and $A^s$ be the adjacency hypermatrices corresponding to $R^w$ and $R^s$, we complete the list in \cite[Theorem 4.6]{mesner_association_1990} of this AST's intersection numbers and tabulate them in Table \ref{tab:psl}. We illustrate only that $A^w A^s A^s = \frac{n-1}{4}A^w$ when $n\equiv 1 \pmod 4$ as computations of the other identities are similar. As with the prior subsection, it suffices to determine $p_{ijk}^l$ when $R_i=R^w$, $R_j=R_k=R^s$, and $R_l\in \{A^w,A^s\}$. Taking $R_l=R^w$ and $(u,v,w)\in R_l$, it can be deduced from Lemma 4.3 of \cite{mesner_association_1990} that satisfying the inclusions $(z,v,w)\in R^w$, $(u,z,w)\in R^s$, and $(u,v,z)\in R^s$ is equivalent to $z=[c:1]^T$, where $c$ and $1-c$ are quadratic non-residues. The number $p_{ijk}^l$ of such $c$ is the number of $x^2\in GF(n)$ such that there is a $y^2$ with $\eta x^2+ \eta y^2=1$. This equation always has a solution $(a_0,b_0)$ in $GF(n)$, since the sets $\lr{\eta a^2:a\in GF(n)}$ and $\lr{1- \eta a^2:a\in GF(n)}$ each have $\frac{n+1}{2}$ elements and so must intersect. The solutions $(x,y)$ are then given by $(a_0,b_0) + (a,b)t_{a,b}$ where $t_{a,b}=-2\frac{\eta a a_0+\eta b b_0}{\eta a^2+\eta b^2}$ and $(0,0)\neq(a,b)$. Notice that the obtained solution from $(a,b)$ is the same as the obtained solutions from any nonzero multiple of $(a,b)$. Further, the parametrization fails if and only if $\eta a^2 = -\eta b^2$; i.e., if $\frac{a}{b}$ is a square root of $-1$. This yields a total of $n-1$ solutions $(x,y)$ to $\eta x^2+ \eta y^2=1$. Since we only desire the number of $x^2$, this yields $p_{ijk}^l=\frac{n-1}{4}$. Similarly, $p_{ijk}^l=0$ when $R_l=R^s$, completing the verification.

\begin{table}[htbp!]
    \centering
    \begin{tabular}{|c|}

\hline
$A^w A^w A^w = 
    \begin{cases} 
    \frac{n-5}{4} A^w, & n\equiv 1\pmod{4}\\
    \frac{n+1}{4} A^s, &n\equiv 3\pmod{4}.
    \end{cases}$ \\\hline
    
    $A^w A^w A^s = A^w A^s A^w = A^s A^w A^w = \begin{cases} 
    \frac{n-1}{4} A^s, & n\equiv 1\pmod{4}\\
    \frac{n-3}{4} A^w, &n\equiv 3\pmod{4}.
    \end{cases}$ \\\hline

  $A^w A^s A^s = A^s A^w A^s = A^s A^s A^w = \begin{cases} 
    \frac{n-1}{4} A^w, & n\equiv 1\pmod{4}\\
    \frac{n-3}{4} A^s, &n\equiv 3\pmod{4}.
    \end{cases}$ \\\hline
    
    $A^s A^s A^s = 
    \begin{cases} 
    \frac{n-5}{4} A^s, & n\equiv 1\pmod{4}\\
    \frac{n+1}{4} A^w, &n\equiv 3\pmod{4}.
    \end{cases}$   \\\hline

$A_1 A^w A^w = A_1 A^s A^s =
    \begin{cases} 
    \frac{n-1}{2} A_1, & n\equiv 1\pmod{4}\\
    0, &n\equiv 3\pmod{4}.
    \end{cases}$ \\\hline
    
    $A^w A_2 A^w = A^s A_2 A^s=
    \begin{cases} 
    \frac{n-1}{2} A_2, & n\equiv 1\pmod{4}\\
    0, &n\equiv 3\pmod{4}.
    \end{cases}$\\\hline
    
$A^w A^w A_3 = A^s A^s A_3 =
    \begin{cases} 
    \frac{n-1}{2} A_3, & n\equiv 1\pmod{4}\\
    0, &n\equiv 3\pmod{4}.
    \end{cases}$\\\hline
    
    $A_1 A^w A^s = A_1 A^s A^w =
    \begin{cases} 
    0, & n\equiv 1\pmod{4}\\
    \frac{n-1}{2} A_1, &n\equiv 3\pmod{4}.
    \end{cases}$\\\hline
$A^w A_2 A^s = A^s A_2 A^w =
    \begin{cases} 
    0, & n\equiv 1\pmod{4}\\
    \frac{n-1}{2} A_2, &n\equiv 3\pmod{4}.
    \end{cases}$\\\hline
    
    $A^w A^s A_3 = A^s A^w A_3 =
    \begin{cases} 
    0, & n\equiv 1\pmod{4}\\
    \frac{n-1}{2} A_3, &n\equiv 3\pmod{4}.
    \end{cases}$\\ \hline

    \end{tabular}

\medskip
    
    \caption{Intersection numbers of ASTs from $PSL(2,n)$, $n$ odd}
    \label{tab:psl}
\end{table}

\begin{rem}
    Let $n$ be an odd prime power.
    \begin{enumerate}
        \item The ternary subalgebra spanned by the adjacency matrices $A^w$ and $A^s$ of the AST from $PS L(2,n)$ is a commutative ternary algebra. Moreover, $A^w$ and $A^s$ each span their own ternary subalgebra when $n\equiv 1 \pmod 4$.
        \item From Table \ref{tab:psl}, the function interchanging $A^w$ and $A^s$ extends linearly to a ternary algebra automorphism of the ternary subalgebra spanned by $A^w$ and $A^s$.   
        \item In Theorem 4.6 (ii) of \cite{mesner_association_1990}, it is claimed that the AST obtained from $PSL(2,n)$ satisfies the following. \[A^w A^s A_1 = \begin{cases}
    0, & n\equiv 1\pmod{4}\\
    \frac{n-1}{2} A_1, &n\equiv 3\pmod{4}. \end{cases}\]
This cannot be the case, as Lemma \ref{rem_trivial_prod} assures that $A^w A^s A_1$ is always 0.
    \end{enumerate}
\end{rem}

\subsection{\texorpdfstring{${A\Gamma L(k,n)}$}{AGamL(k,n)}}

% It is known from \cite[Proposition 4.7]{mesner_association_1990} that the number of relations in the AST obtained from $AGL(1,n)$ is $n+2$, with each nontrivial relation $R_i$ having $n_i^{(3)}=1$. Some intersection numbers of this AST are also given, albeit with some minor errors. We extend and correct these results in \cite{mesner_association_1990} to the ASTs obtained from $AGL_H(k,n) = AGL(k,n)\rtimes H$, where $H\leq Gal(GF(n))$ is a subgroup of $Gal(GF(n))$. 

Let $k\geq2$, $n=p^\alpha$ be a prime power, and $H=Gal(GF(n))$. Let $t=(0,1,0,\ldots,0)^T$ and $\Vec{a}=(a,0,\ldots,0)^T$ for each $a \in GF(n)$ be the given elements from the $k$-dimensional vector space over $GF(n)$. The elements of the two-point stabilizer $A\Gamma L(k,n)_{\vec{0},\vec{1}}$ are the transformations of the form $x\mapsto A\phi(x)$ for some $\phi \in H$ and $A\in GL(k,n)$. Applying these to the elements of the affine space yields two types of orbits. The orbits of the first type have the form $\{{\vvec{\phi(a)}:\phi \in H}\}$ where $a\neq 0,1$. The orbits of this type are in correspondence with the Galois conjugacy classes of $H$ and have corresponding sizes $\operatorname{deg}_{GF(p)}(a)$. Letting $\kappa$ be the number of orbits of the first type and letting $H$ act on $GF(n)$, the Burnside Orbit Counting Theorem implies 
\[\kappa+2 = \frac{1}{\vert H\vert }\sum_{g \in H} \vert Fix(g)\vert  
= \frac{1}{\alpha} \sum_{\beta=1}^{{\alpha}} \vert Fix(x\mapsto x^{p^\beta})\vert
= \frac{1}{\alpha}\sum_{\beta=1}^{\alpha} p^{\gcd({\alpha},\beta)}.\] 
The addend 2 is due to $0$ and $1$ in $GF(n)$, which are stabilized by field automorphisms.
The only orbit of the remaining type has size $n^k-n$ and consists of the vectors linearly independent from $\vec{1}$.

Let $X$ be the AST obtained from the group $A\Gamma L(k,n)$. For each $a\in GF(n)$, denote by $R^a$ the relation in $X$ containing the triple $(\vec{0},\vec{1},\vec{a})$ and let $A^a$ be the corresponding adjacency hypermatrix. Further, let $R^*$ denote the relation $[(\vec{0},\vec{1},t)]$ and let $A^*$ be the corresponding adjacency hypermatrix. Finally, fix any transversal $T$ of the orbits of $H$ on $GF(n)\setminus\lr{0,1}$ so that $\lr{R^a:a\in T}$ is the set of nontrivial relations of $X$ of the form $[(\vec{0},\vec{1},\vec{a})]$ for $a\neq 0,1$.

We list the intersection numbers of $X$ in Table \ref{tab:agaml}. To verify the intersection numbers, take $a,b,c\notin\{0,1\}$. We prove $A^a A^b A^c = \sum_{\ell\in T} p_{\ell} A^\ell$, where \begin{equation}\label{eqn:agaml}
    p_{\ell} = \vert \lr{\phi(c) : \phi \in H, \text{ and } (\exists \psi,\tau \in H)((1-\phi(c))\tau(a) + \phi(c) = \psi(b)\phi(c) = \ell)}\vert  .
\end{equation}

Indeed, let $R_i=R^a$, $R_j=R^b$, $R_k=R^c$ and $R^l\in \{R_a:a\in T\}\cup\{R^*\}$. We first locate $R_l=[(\vec{0},\vec{1},v)]$ such that $p_{ijk}^l$ may be nonzero. Take $(\vec{0},\vec{1},v)\in R_l$. Direct computation reveals that $(\vec{0},\vec{1},z)\in [(\vec{0},\vec{1},\vec{c})]$ is equivalent to $z=\vvec{\phi(c)}$ for some $\phi\in H$. Now, $(\vec{0},z,v)\in (\vec{0},\vec{1},\vec{b})$ is equivalent to the existence of $B\in GL(k,n)$ and $\psi \in H$ such that $B(\vec{0},\vec{1},\vvec{\psi(b)})=(\vec{0},z,v)$. Such $B\in GL(k,n)$ are characterized as those whose first column is $z$, with $v=B\vvec{\psi(b)}=\psi(b)z$. 
Furthermore, $(z,\vec{1},v) \in [(\vec{0},\vec{1},\vec{a})]$ is equivalent to the existence of $A\in GL(k,n)$ and $\tau\in H$ such that $A(\vec{0},\vec{1},\vvec{\tau(a)}) + (z,z,z)= (z,\vec{1},v)$. Such $A$ are characterized by those whose first column is $\vec{1}-z$, with $v=\tau(a)(\vec{1}-z) + z $. These three conditions necessitate that $v=\tau(a)(\vec{1}-\vvec{\phi(c)}) + \vvec{\phi(c)} = \vvec{\psi(b)\phi(c) }$ for some $\phi,\psi,\tau\in H$. Hence, for $p_{ijk}^l$ to be nonzero, $R_l$ must be of the form $R^\ell=[(\vec{0},\vec{1},\vec{\ell})]$, where $ \psi(b) \phi(c) = \ell = \tau(a)(1-\phi(c))+\phi(c)$ for some $\phi,\psi,\tau\in H$. Fix such an $\ell$ via fixing such $\phi,\psi,\tau\in H$. %We determine the number $p^l=p_\ell$ of $z$ that satisfy all conditions of \eqref{conditions_intersection_nums5}. 
With $R_l=R^\ell$, repeating the above reasoning yields Equation (\ref{eqn:agaml}).
All other intersection numbers are obtained similarly.

\begin{table}[htbp!]
    \centering
    \begin{tabular}{|c|}

\hline
$A^a A^b A^c = \sum_{\ell\in T} p_{\ell} A^\ell$\\ \hline
%&$p_\ell=\vert \lr{c^{q^\mu} : 1\leq \mu\leq \frac{\alpha}{\mathfrak{a}}, \text{ and } (\exists \kappa,\lambda)((1-c^{q^\mu})a^{q^\kappa} + c^{q^\mu} = b^{q^\lambda}c^{q^\mu} = \ell)}\vert $ \\

$A^a A^b A^* = A^a A^* A^b  =A^*A^a A^b =0$\\ \hline

$A^a A^* A^* =A^* A^a A^*=A^* A^* A^a= p_* A^*$\\ \hline
%& $p_* = \vert \lr{a^{q^\kappa}: 1\leq \kappa \leq \frac{\alpha}{\mathfrak{a}} }\vert$  \\

$A^* A^* A^* = (n^k-3n+3)A^* + \sum_{a\in T} (n^k-n)A^a $ \\ \hline

$I_1 A^a A^b = p_1 I_1$\\ \hline
% $p^1 = \vert \lr{b^{q^\lambda} : 1\leq \lambda\leq \frac{\alpha}{\mathfrak{a}} \text{ and } (\exists \kappa)(a^{q^\kappa} b^{q^\lambda}=1)}\vert$\\

$A^a I_2 A^b = p_2 I_2$\\  \hline
%$p^2 = \vert \lr{b^{q^\lambda} : 1\leq \lambda\leq \frac{\alpha}{\mathfrak{a}} \text{ and } (\exists \kappa)(a^{q^\kappa} b^{q^\lambda}=a^{q^\kappa}+ b^{q^\lambda})}\vert$\\

$ A^a A^b I_3 = p_3 I_3$\\ \hline
%$p^3 = \vert \lr{b^{q^\lambda} : 1\leq \lambda\leq \frac{\alpha}{\mathfrak{a}} \text{ and }(\exists \kappa)(a^{q^\kappa} + b^{q^\lambda}=1)}\vert$\\

$I_1 A^a A^* = I_1 A^* A^a  = 0$\\ \hline
$A^a I_2 A^* = A^* I_2 A^a = 0 $\\ \hline 
$A^a A^* I_3 = A^* A^a I_3 =0$ \\ \hline
$I_1 A^* A^* = (n^k-n)I_1$\\ \hline
$A^* I_2 A^* = (n^k-n)I_2$\\ \hline
$A^* A^* I_3 = (n^k-n)I_3$ \\ \hline
    \end{tabular}
\smallskip
\begin{tablenotes}
        \item $ p_{\ell} = \vert \lr{\phi(c) : \phi \in H \text{ and } (\exists \psi,\tau \in H)((1-\phi(c))\tau(a) + \phi(c) = \psi(b)\phi(c) = \ell)}\vert $
        \item $p_*=\vert\{\tau(a):\tau \in H\} \vert=\operatorname{deg}_{\operatorname{Fix}(H)}(a)$
        \item $p_1 = \vert \{\psi(b) : \psi \in H \text{ and } (\exists \tau\in H)(\tau(a) \psi(b)=1)\}\vert$
        \item $p_2 = \vert \{\psi(b) : \psi\in H \text{ and } (\exists \tau\in H)(\tau(a)\psi(b)=\tau(a)+ \psi(b) \}\vert$ 
        \item $p_3 = \vert \{\psi(b) : \psi\in H \text{ and }(\exists \tau\in H)(\tau(a) + \psi(b) =1)\}\vert$
    \end{tablenotes}
    
\medskip 
    \caption{Intersection numbers of ASTs from $A\Gamma L(k,n)$, $k\geq 2$}
    \label{tab:agaml}
\end{table}
The next remark extends our results to other subgroups of $A\Gamma L(k,n)$ and the case where $k=1$.

\begin{rem}\label{rem:agaml}Let $k\geq 1$ and $n=p^\alpha$ be a prime power.
\begin{enumerate} 
    \item The adjacency hypermatrices of the form $A^a$ with $a\in T$ span a ternary algebra.
    \item When $k=1$, the orbits of $A\Gamma L(1,n)_{0,1}$ will only be of the first type. In particular, there is no $R^*$ but the third valencies and intersection numbers not involving $R^*$ remain the same.
    \item The results extend to subgroups of $A\Gamma L(k,n)$ of the form \(AGL(k,n)\rtimes K\), where $K$ is any subgroup of $Gal(GF(n))$. In this case, $(AGL(k,n)\rtimes K)_{\vec{0},\vec{1}}$ retains two types of orbits. Orbits of the first type are of the form $\{{\vvec{\phi(a)}:\phi \in K}\}$ with respective sizes $\deg_{\operatorname{Fix}(K)}(a)$. These are in correspondence with the Galois conjugacy classes of $K$. Similar computations reveal that there are \(\kappa =-2+ \frac{\mathfrak{a}}{\alpha} \sum_{\beta=1}^{\frac{\alpha}{\mathfrak{a}}} (p^\mathfrak{a})^{\gcd{(\frac{\alpha}{\mathfrak{a}},\beta)}}\) orbits of the first type, where $p^\mathfrak{a}$ is the size of the fixed field $\operatorname{Fix}(K)$ of $K$. The set of vectors linearly independent from $\vec{1}$ remains the only orbit of the second type. Analogous computations verify that the intersection numbers of the AST from $AGL(k,n)\rtimes K$ are the same as those given in Table \ref{tab:agaml}, provided any mention of $H$ is replaced by $K$. 
\end{enumerate}
\end{rem}

As a particular case of Remark \ref{rem:agaml}.3, taking $K$ to be the trivial automorphism group yields the parameters of the AST obtained from $AGL(k,n)$. Further, taking $k=1$ corrects an error from \cite{mesner_association_1990}.

\begin{rem}
In Proposition 4.7 of \cite{mesner_association_1990}, it is claimed that the adjacency hypermatrices of the AST from $AGL(1,n)$ satisfy $A^b I_2 A^c = I_2$ and $A^b A^c I_3=I_3$ if $bc=1$, and that both products are 0 if $bc\neq 1$. However, these are incorrect. Indeed, if we consider the AST obtained from $AGL(1,5)$, we obtain $A^3 I_2 A^4 = I_2$ and $A^3 A^3 I_3 = I_3$. The correct statements are given below.

\begin{center}
    $A^a I_2 A^b = \begin{cases} I_2, &\text{if }ab=a+b, \\ 0, &\text{otherwise.}\end{cases}$ \;and\; $A^a A^b I_3 = \begin{cases} I_3, &\text{if }a+b=1,  \\ 0, &\text{otherwise.}\end{cases}$
\end{center}
\end{rem}

\subsection{\texorpdfstring{${Sz(2^{2k+1})}$}{Sz(2 2k+1)} %and \texorpdfstring{${Ree(3^{2k+1})}$}{Ree(3 2k+1)}
}

%In this subsection, we consider the Suzuki groups $Sz(2^{2k+1})$ %and the Ree groups $Ree(3^{2k+1})$ as given in \cite{dixon1996permutation}. 
Let $n=2^{2k+1}$ for some $k\geq 0$. In \cite[Proposition 4.8]{mesner_association_1990}, the authors determined that the size of the AST obtained from the Suzuki group $Sz(n)$ is $n+5$. The proof was not given completely, so we include a proof here, thereby also obtaining the third valencies of the nontrivial relations. %We then utilize a similar approach for the $Ree(q)$, where $q$ is an odd power of 3. 
%We will also show that the AST obtained from the Ree group $Ree(q)$ has size $q^2+q+6$, where $q= 3^{2k+1}$ for some odd power of 3, thereby also obtaining its third valencies. We begin with a lemma regarding the orbits of a two-point stabilizer of $Sz(q)$.

Let $\sigma$ be the automorphism $a\mapsto a^{2^{k+1}}$ of $GF(n)$, $f:(GF(n))^2 \rightarrow GF(n)$ be the function $(x,y)\mapsto f_{xy}=xy + \sigma(x)x^2+\sigma(y)$, and $\Omega$ be the set \[\Omega= \lr{(x,y,f(x,y)):x,y \in GF(n)}\cup \lr{\infty}.\]
From \cite{dixon1996permutation}, $\Omega$ has $n^2+1$ points and $Sz(n)$ acts two-transitively on $\Omega$. By the same reference, the stabilizer of $(0,0,0)$ and $\infty$ is $Sz(n)_{0,\infty}=\lr{n_a : a \in GF(n)\setminus\lr{ 0} }$, where $n_a:\Omega \rightarrow \Omega$ is defined by \[n_a:(x,y,z) \mapsto (ax,\sigma(a)ay,\sigma(a)a^2z).\]

We show that there are two types of orbits when the two-point stabilizer is applied to $\Omega\setminus \{0,\infty\}$. Orbits of the first type have representatives with a nonzero first coordinate while the orbit of the second type will have representatives with a zero first coordinate. %In particular, each orbit of the first type has a unique representative with first coordinate 1 while the orbit of the second type has a unique representative with second coordinate 1.

To see these, take any $(x,y,f_{xy})\in\Omega$ with $x\neq 0$. Applying the elements $n_a$ of $Sz(n)_{0,\infty}$ to $(x,y,f_{xy})$ yields $n-1$ distinct elements as $a$ ranges over the nonzero elements of $GF(n)$. In particular, exactly one element of the orbit of $(x,y,f_{xy})$ has a first coordinate of 1. By fixing $x=1$ and varying $y$, we obtain $n$ orbits of the first type. If we instead consider an element $(0,y,f_{0,y})$ with $y\neq 0$, similar reasoning gives $n-1$ elements in its orbit and a unique representative of this orbit with second coordinate 1. However, this orbit must then contain all elements of $\Omega \setminus \{0,\infty\}$ with first coordinate 0; hence, there is only one orbit of the second type.

\subsection{\texorpdfstring{${Ree(3^{2k+1})}$}{Ree(3 2k+1)}}
Let $n = 3^{2k+1}$ for some $k\geq 0$, $\sigma$ be the automorphism of $GF(n)$ given by $a \mapsto a^{3^{k+1}}$, and $f,g,h:(GF(n))^3 \rightarrow GF(n)$ be the functions given by 
\begin{align*}
f:(x,y,z)\mapsto f_{xyz}&=x^2y-xz+\sigma(y)-\sigma(x)x^3, \\
g:(x,y,z)\mapsto g_{xyz}&= \sigma(x)\sigma(y)-\sigma(z)+xy^2+yz-(\sigma(x))^2x^3,\\
h:(x,y,z)\mapsto h_{xyz}&=x\sigma(z)-\sigma(x)xy + \sigma(x)x^3y + x^2 y^2 -\sigma(y)y -z^2 + (\sigma(x))^2 x^4. 
\end{align*}
The group $Ree(n)$ acts two-transitively on the set \[\Omega = \lr{(x,y,z,f_{xyz},g_{xyz},h_{xyz}): x,y,z \in GF(n) } \cup \lr{\infty}\]
  of $n^3+1$ points \cite{dixon1996permutation}. By the same reference, the stabilizer of $(0,0,0,0,0,0)$ and $\infty$ is $Ree(n)_{0,\infty}=\lr{n_a : a \in GF(n)\setminus\lr{ 0} }$, where $n_a:\Omega \rightarrow \Omega$ is defined by \[n_a:(x,y,z,r,s,t) \mapsto (ax,\sigma(a)ay,\sigma(a)a^2z,\sigma(a)a^3 r,(\sigma(a))^2 a^3 s,(\sigma(a))^2 a^4 t).\]

  Determining the orbits of $Ree(n)_{0,\infty}$ is approached similarly to the determination of the orbits of $Sz(n)_{0,\infty}$. This yields three types of orbits. There are $n^2$ orbits of the first kind, each with a unique representative of the form $(1,y,z,f_{1yz},g_{1yz},h_{1yz})$. There are $n$ orbits of the second kind, each with a unique representative of the form $(0,y,1,f_{0y1},g_{0y1},h_{0y1})$. An orbit of either of these two types has size $n-1$. To obtain the remaining orbits, we reason as follows. 

  Take $y\neq 0$ and let $w = (0,y,0,\sigma(y),0,-\sigma(y)y)$. Suppose for $a,b\in GF(n)\setminus\lr{0}$ that \begin{align*}
    n_a(w)&=(0,\sigma(a)ay,0,\sigma(a)a^3\sigma(y),0,(\sigma(a))^2 a^4(-\sigma(y)y))\\&=(0,\sigma(b)by,0,\sigma(b)b^3\sigma(y),0,(\sigma(b))^2 b^4(-\sigma(y)y))=n_b(w).
\end{align*}

Using the middle equality and the second and fourth coordinates of the involved sextuples, we see that $\sigma(a)a =\sigma(b)b$, and $\sigma(a)a^3 = \sigma(b)b^3$. This tells us that $a=\pm b$. %Reversing the process tells us that if $a=\pm b$, then $n_a(w)=n_b(w)$. Since $char(GF(q))\neq 2$, each nonzero element is distinct from its additive inverse. 
Hence, $n_a(w)$ takes $\frac{n-1}{2}$ distinct values as $a$ ranges over the nonzero elements of $GF(n)$; that is, the orbit of $w$ under $Ree(n)_{0,\infty}$ has $\frac{n-1}{2}$ elements. Since there are $n-1$
 sextuples of the form $(0,y,0,\sigma(y),0,-\sigma(y)y)$ in $\Omega$, there are two orbits of the third type. Considering possible values of $y$ reveals that $y$ either is or is not of the form $a\sigma(a)=a^{k+2}$ for some $a\neq 0$. Hence, taking $y=1$ and $y=b$, where $b$ is not a $(k+2)$-th power in $GF(n)$, yields the two orbits of the third type.  %Taking $y=1$, one orbit must be \[\lr{(0,\sigma(a)a,0,f(0,\sigma(a)a,0),0,h(0,\sigma(a)a,0):a\in GF(q)\setminus\lr{0}}.\] Considering the second coordinate tells us that the other orbit must be \[\lr{(0,\sigma(a)ab,0,f(0,\sigma(a)ab,0),0,h(0,\sigma(a)ab,0):a\in GF(q)\setminus\lr{0}}.\] Here $b$ is any nonzero element not expressible in the form $a\sigma(a)=a^{k+2}$. Therefore, there are $q^2+q+2$ orbits overall. 

\subsection{\texorpdfstring{${PGU(3,n)}$}{PGU(3,n)}} For $n$ a prime power, $PGU(3,n)$ is the group $GU(3,n)$ modulo its center. Here $GU(3,n)$ is the group of $3\times 3$ invertible matrices over $GF(n^2)$ which preserve the Hermitian form \[\varphi((u_1,u_2,u_3)^T,(v_1,v_2,v_3)^T)=u_1 {v_3}^n + u_2 {v_2}^n + u_3 v_1^n \] on the three-dimensional vector space over $GF(n^2)$. The group $PGU(3,n)$ acts two-transitively on \[\Omega = \lr{\lrr{(1,0,0)^T}} \cup \lr{\lrr{(a,b,1)^T}:a+{a}^n+b{b}^n=0,\;a,b\in GF(n^2)},\] the set of $n^3+1$ $\varphi$-isotropic lines \cite{dixon1996permutation}. Letting $E_1=\lrr{(1,0,0)^T}$ and $E_3=\lrr{(0,0,1)^T}$, \cite{dixon1996permutation} states that 
\[GU(3,n)_{E_1,E_3}=%
%\Bigg
\{
%{\begin{bmatrix} c&0&0\\0&d&0\\0&0&\frac{1}{c^q} \end{bmatrix} 
Diag(c,d,c^{-n}):c,d\in GF(n^2), dd^n=1, c\neq 0
%}\Bigg
\}.
\]
By scaling each matrix in this set with the appropriate $\frac{1}{d}$, we see that $PGU(3,n)_{E_1,E_2}$ is a cyclic group isomorphic to $C_{n^2-1}$, each of whose elements is uniquely represented by a diagonal matrix $Diag(c,1,{c^{-n}})$ with $c\neq 0$. 
Given $\lrr{(a,b,1)^T}\in\Omega$ with $a,b\neq 0$, the elements of its orbit under $PGU(3,n)_{E_1,E_2}$ are $\lrr {(ac^{n+1},bc^n,1)^T}$, where $c\neq{0}$. Since $x\mapsto x^n$ is a field automorphism, this orbit has size $n^2-1$ and contains a unique representative of the form $\lrr{(r,1,1)^T}$ with $r+r^n+1=0$. On the other hand, the elements of the orbit of $\lrr{(a,0,1)^T}\in\Omega$, $a\neq 0$, are $\lrr{(ac^{n+1},0,1)^T}$, $c\neq{0}$. The function $f_{n+1}:x \mapsto x^{n+1}$ is an endomorphism of $GF(n^2)\setminus\lr{0}\cong C_{n^2-1}$ whose kernel consists of the $c\neq0$ whose order divides $n+1$. This is the multiplicative subgroup of the field isomorphic to $C_{gcd(n+1,n^2-1)}=C_{n+1}$. It follows that $Im(f_{n+1})\cong C_{n-1}$, so the orbit of $\lrr{(a,0,1)^T}\in\Omega$ has $n-1$ elements.

We now determine how many of each type of orbit exists. First, we determine how many elements of $\Omega$ are of the form $\lrr{(s,0,1)^T}$ for some $s\neq 0$. To be isotropic subspaces, $\lrr{(s,0,1)^T}$ with $s\neq 0$ must satisfy $s+s^n=0$ and $s\neq 0$. If $n$ were even, this condition is equivalent to $s^{n-1}=1$. These $s$ form a subgroup of the multiplicative group of the field isomorphic to $C_{gcd(n-1,n^2-1)}=C_{n-1}$. If $n$ were odd, the conditions $s+s^n=0$ and $s\neq 0$ are equivalent to $s^{n-1}=-1$. By reasoning as above, the set of $s$ that satisfy this condition is $Ker(f_{2(n-1)}) \setminus Ker(f_{n-1})$ where $f_{n-1}:x \mapsto x^{n-1}$ and $f_{2(n-1)}:x \mapsto x^{2(n-1)}$ are the given endomorphisms of $GF(n)^2\setminus\lr{0}$.  Thus, there are $n-1$ elements of $\Omega$ of the form $\lrr{(s,0,1)^T}$ for some $s\neq 0$, regardless of whether or not $n$ is even. Since an orbit of the second type has $n-1$ elements, there is only one such orbit. The remaining orbits must be of the first type, of which there are $\frac{n^3-1 - (n-1)}{n^2-1}=n$.

\begin{rem}
Recall that $PGU(3,n)=PSU(3,n)$ whenever $3$ does not divide $n+1$ and that $PSU(3,n)$ is a proper subgroup of $PGU(3,n)$ otherwise \cite{dixon1996permutation}. Reasoning as above, each element of $PSU(3,n)_{E_1,E_3}$ is uniquely represented by a matrix of the form $Diag(c,1,c^{-n})$, where $c$ is in the order $\frac{n^2-1}{3}$ subgroup of $GF(n^2)\setminus\{0\}$. Similar computations then reveal that orbits of $PGU(3,n)_{E_1,E_2}$ of the form $[\langle(r,1,1)^T\rangle]$ will be a union of three equally sized orbits of $PSU(3,n)_{E_1,E_2}$. These three have representatives $\lrr{(a,1,1)^T}$, $\lrr{(b,\alpha,1)^T}$, and $\lrr{(c,\alpha^2,1)^T}$ for some $a,b,c\in GF(q^2)$. Here, $\{1,\alpha,\alpha^2\}$ is a full set of coset representatives of the order $\frac{n^2-1}{3}$ subgroup of $GF(n^2)\setminus\{0\}$. Meanwhile, the orbit of $\langle(s,0,1)^T\rangle$ in $PGU(3,n)_{E_1,E_2}$ remains unchanged as an orbit of $PSU(3,n)_{E_1,E_2}$.     
\end{rem}

\subsection{\texorpdfstring{${{Sp({2k},2)}}$}{Sp(2k,2)}}

The following description of the symplectic group uses \cite{dixon1996permutation}, \cite{kleidman1990subgroup}, and \cite{sastry2002doubly} as references. For $k\geq2$, the symplectic group $Sp(2k,2)$ is the subgroup of $GL(2k,2)$ preserving the bilinear form  \[b:((x_1,\ldots,x_k,y_1,\ldots,y_k)^T,(u_1,\ldots,u_k,v_1,\ldots,v_k)^T)\mapsto \sum_{i=1}^k (x_i v_i + u_i y_i)\] on the $2k$-dimensional vector space $V$ over $GF(2)$.
The group $Sp(2k,2)$ acts on the set $\Omega$ of quadratic forms $q$ on $V$ which satisfy $b(v,u)=q(v+u)-q(v)-q(u)$ for all $u,v\in V$. This action is given by $A q(v)=q(A^{-1}v)$ for $q\in \Omega$, $v\in V$, and $A\in Sp(2k,2)$. It is known that $Sp(2k,2)$ has two orbits $\Omega^+$ and $\Omega^-$ on $\Omega$ characterized by the Witt index. Those quadratic forms with Witt index $k$ belong to $\Omega^+$ while those with Witt index $k-1$ belong to $\Omega^-$. %In fact, $\Omega^+$ has size $2^{k-1}(2^k+1)$ and is the orbit of \[q^+:(x_1,\ldots,x_k,y_1,\ldots,y_k)^T\mapsto \sum_{i=1}^k x_iy_i.\] Meanwhile, $\Omega^-$ has size $2^{k-1}(2^k-1)$ and is the orbit of \[q^-:(x_1,\ldots,x_k,y_1,\ldots,y_k)^T\mapsto x_1 + y_1 + \sum_{i=1}^k x_iy_i.\]  
The actions of $Sp(2k,2)$ on $\Omega^+$ and $\Omega^-$ are the two-transitive actions of interest. We use the respective notations $Sp^+(2k,2)$ and $Sp^-(2k,2)$ when referring to the symplectic group with respect to these actions. Fix an $\varepsilon \in \lr{+,-}$ and let $q\in \Omega^\varepsilon$. The point stabilizer of a quadratic form $q$ in $Sp^\varepsilon(2k,2)$ is the orthogonal group $O^\varepsilon(q)$. %It is isomorphic to either $O^+(2n,2)$ or $O^- (2n,2)$, as in \cite{kleidman1990subgroup}. 
Notice that $O^\varepsilon(q)$ acts naturally on the $2k$-dimensional vector space $V$ over $GF(2)$. In fact, the bijection $\tau: (x\mapsto q(x)+b(x,v))\mapsto v$ from $\Omega$ to $V$ is an isomorphism of $O^\varepsilon(q)$-sets. Under $\tau$, $0\in V$ corresponds to $q\in \Omega^\varepsilon$ and the isotropic vectors of $V$ with respect to $q$ correspond to the other elements of $\Omega ^\varepsilon$.

We utilize $\tau$ to describe the orbits of a two-point stabilizer. Fix $k\geq 3$ and $\varepsilon\in\{+,-\}$. Take distinct quadratic forms $q_1$ and $q_2$ in $\Omega^\varepsilon$. View $\Omega^\varepsilon$ as the set of $q_1$-isotropic vectors in $V$ via the $O^\varepsilon(q_1)$-set isomorphism $\tau$ so that $\tau(q_1)=0$ and $\tau(q_2)=v$ for some $q_1$-isotropic $v\in V$. Through this identification, we view the action of $Sp^\varepsilon(2k,2)_{q_1,q_2}=O^\varepsilon(q_1)_{q_2}$ on $\Omega^\varepsilon \setminus\lr{q_1,q_2}$ as the action on the $q_1$-isotropic vectors of $V\setminus\lr{0,v}$. By Proposition 2 (ii) of \cite{king1981some}, one orbit consists of the nonzero $q_1$-isotropic vectors distinct from and orthogonal to $v$ while the other orbit consists of the nonzero $q_1$-isotropic vectors not orthogonal to $v$. Proposition 1 in \cite{Vasilev1995} then yields the respective sizes of these orbits, namely $ 2^{2k-2}+\varepsilon 2^{k-1}-2$ and $ 2^{2k-2}$.

\begin{rem}
    The above reasoning also applies to the case when $k=2$. Indeed, a two-point stabilizer of $Sp^+(4,2)$ will have two orbits, both of size $4$. However, $Sp^-(4,2)$ occurs as a degenerate case since $Sp^-(4,2)$ is three-transitive. In this case, any two-point stabilizer will have only one orbit, necessarily of size 4. 
\end{rem}

\subsection{Sporadics}
The sporadic two-transitive groups are the permutation representations of the Mathieu groups $M(n)$ of degree $n$ (with $n\in\lr{11,12,22,23,24}$), the projective group $PSL(2,11)$ of degree 11, the Mathieu group $M(11)$ of degree 12, the alternating group $A_7$ of degree 15, the Higman-Sims group $HS$ of degree 176, and the Conway group $Co_3$ of degree 276 \cite{dixon1996permutation}. Due to their higher transitivity, Example \ref{ex_int5} and Remark \ref{rem_oneNontrivial} apply to the ASTs from the Mathieu groups. As such, we refrain from tabulating their parameters. The AST from $HS$ is known to have seven relations \cite{mesner_association_1990}. Using GAP 4.11.1 \cite{GAP4}, we provide the sizes and the third valencies of the ASTs from the other sporadic two-transitive groups in Table \ref{tab:thirdvalencies_and_sizes}. For the ASTs with more than one nontrivial relation, we tabulate their intersection numbers in Tables \ref{tab:PSL}, \ref{tab:A7}, \ref{tab:HS}, and \ref{tab:CO}. In particular, the subalgebra generated by the adjacency hypermatrices of the nontrivial relations of the AST obtained from any sporadic two-transitive group is commutative. 

\begin{comment}
\begin{table}[!htbp]
    \centering
%\resizebox{\textwidth}{!}
{
\setlength{\tabcolsep}{5pt}
    \begin{tabular}{ccc||ccc}
       Group &  AST Size& $n_i^{(3)}$ & Group & AST Size& $n_i^{(3)}$ \\ \hline
        $M(11)$  &5& 9 &$M(11)$ (degree $12$) &5& 10\\ 
        $M(12)$  &5&10 &$PSL(2,11)$ (degree $11$) &6&3, 6 \\
         $M(22)$ &5&20  &$A_7$ (degree $15$) &6&1, 12 \\
         $M(23)$ &5&21  &$HS$&7&12, 72, 90\\
           $M(24)$ &5& 22 &$Co_3$&6&112, 162\\
    \end{tabular}}
\caption{Sizes and third valencies of the ASTs obtained from the sporadic two-transitive group actions.}
    \label{SporadicOrbits}
\end{table}
\end{comment}

\begin{table}[!htbp]
    \centering
    \begin{tabular}{cccccc}

$p_{144}^1=3$&$p_{155}^1=6$&$p_{424}^2=3$&$p_{443}^3=3$&$p_{445}^5=1$&$p_{454}^5=1$\\
$p_{455}^4=2$&$p_{455}^5=1$&$p_{525}^2=6$&$p_{544}^5=1$&$p_{545}^4=2$&$p_{545}^5=1$\\
$p_{553}^3=6$&$p_{554}^4=2$&$p_{554}^5=1$&$p_{555}^4=2$&$p_{555}^5=2$

    \end{tabular}
    \caption{Nonzero intersection numbers $p_{ijk}^l$ of the AST from $PSL(2,11)\leq M(24)$ when at most one of $i,j,k$ is less than 4.}
    \label{tab:PSL}
\end{table}

\begin{table}[!htbp]
    \centering
    \begin{tabular}{cccccc}
$p_{144}^1=1$&$p_{155}^1=12$&$p_{424}^2=1$&$p_{443}^3=1$&$p_{455}^5=1$&$p_{525}^2=12$\\
$p_{545}^5=1$&$p_{553}^3=12$&$p_{554}^5=1$&$p_{555}^4=12$&$p_{555}^5=9$

    \end{tabular}
    \caption{Nonzero intersection numbers $p_{ijk}^l$ of the AST from $A_7\leq PSL(4,2)$ when at most one of $i,j,k$ is less than 4.}
    \label{tab:A7}
\end{table}

\begin{table}[!htbp]
    \centering
    \begin{tabular}{cccccccc}
      $p_{144}^1=72$&$p_{155}^1=90$&$p_{166}^1=12$&$p_{424}^2=72$& $p_{443}^3=72$ &$p_{444}^4=20$\\
      
      $p_{445}^5=32$& $p_{445}^6=30$ & $p_{446}^5=4$&$p_{446}^6=6$&$p_{454}^5=32$&$p_{454}^6=30$\\
      $p_{455}^4=40$&$p_{456}^4=5$&$p_{464}^5=4$&$p_{464}^6=6$& $p_{465}^4=5$ &$p_{466}^4=1$\\
      $p_{525}^2=90$&$p_{544}^5=32$&$p_{544}^6=30$&$p_{545}^4=40$&$p_{546}^4=5$&$p_{553}^3=90$\\
    $p_{554}^4=40$&$p_{555}^5=41$&$p_{555}^6=60$&$p_{556}^5=8$&$p_{564}^4=5$&$p_{565}^5=8$\\
    $p_{626}^2=12$&$p_{644}^5=4$&$p_{644}^6=6$&$p_{645}^4=5$&$p_{646}^4=1$&$p_{654}^4=5$\\
    $p_{655}^5=8$&$p_{663}^3=12$&$p_{664}^4=1$&$p_{666}^6=5$ 

    \end{tabular}
    \caption{Nonzero intersection numbers $p_{ijk}^l$ of the AST from $HS$ when at most one of $i,j,k$ is less than 4.}
    \label{tab:HS}
\end{table}

\setlength{\tabcolsep}{5pt}
\begin{table}[!htbp]
    \centering
    \begin{tabular}{cccccc}
    $p_{144}^1=162$&$p_{155}^1=112$&$p_{424}^2=162$&$p_{443}^3=162$&$p_{444}^4=105$&$p_{445}^5=81$\\
    $p_{454}^5=81$&$p_{455}^4=56$&$p_{525}^2=112$&$p_{544}^5=81$&$p_{545}^4=56$&$p_{553}^3=112$\\
    $p_{554}^4=56$&$p_{555}^5=30$      

    \end{tabular}
    \caption{Nonzero intersection numbers $p_{ijk}^l$ of the AST from $Co_3$ when at most one of $i,j,k$ is less than 4.}
    \label{tab:CO}
\end{table}

%\newpage

%\section{Concluding remarks}
%Through an explicit description of the orbits of specific two-point stabilizers, the work herein provides a starting point for obtaining the intersection numbers from the ASTs obtained from the projective unitary groups, symplectic groups, Suzuki groups, and Ree groups. For the ASTs from $AG L_H(k,n)$ and subgroups of $P\Gamma L(k,n)$ containing $PSL(k,n)$ or $PGL(k,n)$, the intersection numbers were obtained through descriptions of orbits by exploiting the semidirect product structure of these groups and their convenient representations. This suggests that representations of the projective unitary, symplectic, Suzuki, and Ree groups that are more amenable to computations of orbits may be necessary to obtain the intersection numbers of their corresponding ASTs. The problem of determining sizes, third valencies, and intersection numbers of ASTs from the two-transitive subgroups of $A\Gamma L(k,n)$ that are not of the form $AGL_H(k,n)$, with $H\leq Gal(GF(n))$, also remains open.

%\clearpage

\section*{Declarations}
\subsection*{Competing Interests}
On behalf of all authors, the corresponding author states that there is no conflict of interest.

\subsection*{Data and Source Code Availability}
The datasets generated and/or analysed during the current study, along with the source codes for obtaining these, are available from the corresponding author on reasonable request.

%% If you have bibdatabase file and want bibtex to generate the
%% bibitems, please use
%%
\bibliographystyle{amsplain}
 \bibliography{ternassoc}
% \bibliography{cas-refs}

%% else use the following coding to input the bibitems directly in the
%% TeX file.

% \begin{thebibliography}{00}

% %% \bibitem{label}
% %% Text of bibliographic item

% \bibitem{}

\end{document}